\newtheorem{theorem}{Theorem}[section]
\newtheorem{proposition}[theorem]{Proposition}
\newtheorem{corollary}[theorem]{Corollary}
\newtheorem{lemma}[theorem]{Lemma}
\theoremstyle{definition}
\newtheorem{example}[theorem]{Example}
\theoremstyle{remark}
\newtheorem{remark}[theorem]{Remark}
\newcommand{\prt}{\partial}
\newcommand{\vphi}{\varphi}
\newcommand{\R}{\mathbb{R}}
\newcommand{\Z}{\mathbb{Z}}
\newcommand{\calA}{\mathcal{A}}
\numberwithin{equation}{section}
\begin{document}

\title[Functional equation]{Freezing in space-time:\\
A functional equation linked with a PDE system}

\author{Krzysztof Burdzy and Adam J. Ostaszewski}

\address{KB: Department of Mathematics, Box 354350, University of Washington, Seattle, WA 98195, USA}
\email{burdzy@uw.edu}

\address{AJ: Mathematics Department, London School of Economics, Houghton Street, London WC2A 2AE, UK}
\email{a.j.ostaszewski@lse.ac.uk}

\thanks{KB's research was supported in part by Simons Foundation Grant 506732. }

\keywords{functional equation, involution, partial differential equation, pinned billiard balls, hydrodynamic limit, freezing time, wave equation}

\subjclass[2010]{39B22; 37J99, 82C20}

\pagestyle{headings}

\begin{abstract}

We analyze the functional equation $F(x+F(x))=-F(x)$ and reveal its relationship with a system of partial differential equations arising as the hydrodynamic limit of a system of pinned billiard balls on the line. The system of balls must freeze at some time, i.e., no velocity may change after the freezing time.
The terminal velocity
and the freezing time profiles play the role of boundary conditions for the PDEs (qua terminal conditions, despite being ``initial conditions'' from the wave equation perspective pursued here).
Solutions to the functional equation
provide the link between the freezing time and terminal velocity profiles on the one hand, and the solution to the PDE in the entirety of the space-time domain on the other.
\end{abstract}

\maketitle

\section{Introduction}

This article is concerned with the functional equation
\begin{align}\label{m12.1}
F(x+F(x))=-F(x),
\end{align}
where $F$ represents an unknown real-valued function defined on $\R$ or a subset of $\R$,
and its key role in linking the solutions of a PDE system to its boundary conditions.
In view of its structure \eqref{m12.1} is termed a single-variable, composite, functional equation (cf. \cite{Aczel89} Chap. 19). Nearest in structure  to \eqref{m12.1}, but in two variables,
is the  Go\l \c{a}b-Schinzel equation
\begin{align}\label{m12.3}
F(x+yF(x))=F(x)F(y).
\end{align}
Its continuous solutions take one of the three forms: $F(x)\equiv 0$,  $F_{\rho}(x):=1+\rho x$, or $F(x)=\max\{F_{\rho}(x),0\}$, for some constant $\rho\in \R$ (see \cite[p. 17]{Brz}). 
For a further discussion of \eqref{m12.3} see \cite[Ch. 19]{Aczel89} and for a survey of the literature \cite{Brz}. Since $F_{-2}(1)=-1$, substituting $y=1$ in \eqref{m12.3} shows that $F_{-2}$ is a solution to \eqref{m12.1}; cf. Example \ref{m23.22}. However, the solutions to \eqref{m12.1} are more plentiful.

Equation \eqref{m12.1} arises in the analysis of a system of ``pinned balls'' and the corresponding partial differential equations. We will discuss this application of \eqref{m12.1} in Section \ref{sec:wave}. See Remark \ref{a1.1} for a brief outline of the relationship between pinned balls and partial differential equations.
The differential equations discussed in this article are amenable to analysis using well-known methods. A more complete and more general analysis of our system of PDEs will be presented in a separate paper.

There are numerous books devoted to functional equations, see for example, \cite{Aczel66,Aczel89,Small}. We did not find equation \eqref{m12.1} in any of these books or in other publications.

Equation \eqref{m12.1} is closely related to involutions. We will use results on involutions presented in \cite{Zamp}. We refer the reader to that article for a review of the literature, noting here only the early contribution \cite{Aczel48}.

Equation \eqref{m12.1} and associated involutions will be discussed in Section \ref{feinv}. This will be followed by Section \ref{examp} with examples. Finally, Section \ref{sec:wave} will present an application of \eqref{m12.1} to partial differential equations, including examples.

\section{The functional equation \eqref{m12.1} and involutions}\label{feinv}

We will discuss solutions $F$ to \eqref{m12.1} defined on a set $A\subset \R$.
Note that $F(x) \equiv 0$ is a solution to \eqref{m12.1} on $\R$ so the set of  solutions is non-empty.

Let
\begin{align*}
\vphi(x) = x+ F(x), \qquad x\in A.
\end{align*}
The dependence of the function $\vphi$ on $F$ is not reflected in the notation but this should not cause confusion.

\begin{proposition}\label{m19.1}
Suppose that $F$ is  continuous and solves \eqref{m12.1} on an interval $I$, and $I$ is a maximal interval in this respect, i.e., if $F$ is continuous and solves \eqref{m12.1} on an interval $I'$ such that $I\subset I'$ then $I'=I$. Note that $F$ must necessarily be defined on $J:= \vphi(I)$ for \eqref{m12.1} to be meaningful.

(i) The function $\vphi$ is an involution on $I$, i.e.,
\begin{align*}
\vphi(\vphi(x)) = x , \qquad x\in I.
\end{align*}

(ii) $\vphi(J)=I$.

(iii) The function $\vphi$ is continuous and strictly monotone on $I$.

(iv) The function $\vphi$ is continuous and strictly monotone on $J$.

(v) If $I\cup J$ is an interval then $\vphi$ is continuous and strictly decreasing on $J$.
\end{proposition}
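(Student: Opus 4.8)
The plan is to lean on what parts (i)--(iv) already provide and to use the hypothesis that $K:=I\cup J$ is an interval for its one genuine purpose, namely to eliminate the increasing alternative. First I would record that the monotonicity type of $\varphi$ is necessarily the \emph{same} on $I$ and on $J$. Indeed, (i) and (ii) say exactly that $\varphi|_J$ is the inverse of the bijection $\varphi|_I\colon I\to J$, and the inverse of a strictly increasing (resp. strictly decreasing) map is again strictly increasing (resp. strictly decreasing). Since (iv) already supplies continuity and strict monotonicity on $J$, the entire statement reduces to ruling out that this common type is ``increasing''.

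So I would argue by contradiction: suppose $\varphi$ is strictly increasing on both $I$ and $J$, and choose $x_0$ with $y_0:=\varphi(x_0)\neq x_0$, noting $\varphi(y_0)=x_0$ by (i). Without loss of generality $x_0<y_0$, with $x_0\in I$ and $y_0\in J$, and since $K$ is an interval, $[x_0,y_0]\subseteq I\cup J$. If $x_0,y_0$ both lie in $I$, or both lie in $J$, strict increase on that interval gives $\varphi(x_0)<\varphi(y_0)$, i.e. $y_0<x_0$, a contradiction. The same contradiction arises whenever there is an overlap point $z\in I\cap J$ with $x_0<z<y_0$, by sandwiching $y_0=\varphi(x_0)<\varphi(z)<\varphi(y_0)=x_0$ using increase on $I$ for $x_0<z$ and increase on $J$ for $z<y_0$. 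Thus the only escape is $x_0\in I\setminus J$, $y_0\in J\setminus I$ with $I\cap J\cap(x_0,y_0)=\varnothing$; here $I$ and $J$ merely abut inside $K$, and I would exclude this by the endpoint bookkeeping of the continuous monotone bijection $\varphi|_I\colon I\to J$ (an included endpoint of $I$ cannot map onto a range lacking that extremum, etc.), which is incompatible with $K$ being an interval. Hence no such $x_0$ exists, i.e. $\varphi=\mathrm{id}$ on $K$, equivalently $F\equiv 0$.

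Finally I would dispose of this degenerate outcome: the identity corresponds to the trivial solution $F\equiv 0$, whose maximal interval is $I=J=\R$; excluding it (the case of interest being $F\not\equiv 0$), the common monotonicity type must be strictly decreasing. Combined with (iv), $\varphi$ is then continuous and strictly decreasing on $J$, which is the assertion of (v).

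I expect the main obstacle to be precisely the abutting configuration in the middle of the argument: upgrading the two \emph{separate} monotonicities on $I$ and $J$ into usable global information on $K=I\cup J$. When $I$ and $J$ overlap in an interval this is painless (their interiors cover $K$, so $\varphi$ is continuous and the sandwich applies), but the borderline case in which they meet in a single point forces a careful inclusion-of-endpoints analysis of the monotone bijection $\varphi|_I$; once continuity of the involution on $K$ is secured, the classical fact that a continuous increasing involution on an interval is the identity makes the rest routine.
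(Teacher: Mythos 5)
Your overlap-case sandwich is correct, and your core mechanism (a strictly increasing involution can have no non-fixed point) is the same one the paper copies from Zampieri. But the abutting case that you propose to dismiss by ``endpoint bookkeeping'' cannot be dismissed: it actually occurs, with every hypothesis of Proposition \ref{m19.1} satisfied. Restrict the paper's own Example \ref{m23.24}, $F(x)=(-1)^{k}$ for $k<x\leq k+1$, to $I=(0,1]$. There $F\equiv 1$ is continuous and solves \eqref{m12.1}; $I$ is maximal, since $F(0)=-1$ and $F\equiv -1$ on $(1,2]$, so $F$ restricted to any strictly larger interval is discontinuous at $0$ or at $1$; the map $\varphi(x)=x+1$ carries $I$ onto $J=(1,2]$, sending the included endpoint $1$ of $I$ to the included endpoint $2$ of $J$, so no accounting of endpoints yields a contradiction; and $K=I\cup J=(0,2]$ is an interval. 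Yet on $J$ we have $\varphi(x)=x-1$, which is strictly \emph{increasing}, and $F\not\equiv 0$. So claim (v) fails in exactly the configuration your proof leaves open, and the gap in your third step is not closable by any argument.

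What your attempt actually uncovers is that the paper's own proof has the same defect: its assertion that (iii) and (iv) make $\varphi$ ``continuous and strictly monotone on $I\cup J$'' is false when $I$ and $J$ are disjoint abutting intervals (in the example above $\varphi$ jumps from the value $2$ to the value $0$ across $x=1$), and Zampieri's argument needs continuity on the whole interval. Your proof does establish (v) under the extra hypothesis $I\cap J\neq\emptyset$: since $I$ and $J$ are intervals, any $z\in I\cap J$ must lie strictly between $x_0\in I\setminus J$ and $y_0\in J\setminus I$ (otherwise $x_0$ would lie between two points of $J$, or $y_0$ between two points of $I$), so your sandwich always applies and the escape configuration cannot arise. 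One must also exclude the trivial solution, as you do: for $F\equiv 0$ one has $\varphi=\mathrm{id}$ and $J=I$, a second counterexample to (v) as literally stated, and here your explicit treatment of the identity case is more careful than the paper, which passes over it in silence. In short, your instinct about where the difficulty sits was exactly right; what is missing is not a lemma but a hypothesis.
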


\begin{proof}
 Consider any $x\in I$ and let $z=\varphi (x)=x+F(x)$. Then, using \eqref{m12.1},
\[
\varphi (\varphi (x))=\varphi (z)=z+F(z)=x+F(x)+F(x+F(x))=x+F(x)-F(x)=x.
\]
This proves (i). Claim (ii) follows from (i).

(iii)
If $\varphi (x)=\varphi (y)$ then $x=\varphi (\varphi (x))=\varphi
(\varphi (y))=y$. In other words, $\vphi$ is injective on $I$. Since $F$ is continuous on $I$, so is $\vphi$. This implies that $\vphi$ is continuous and strictly monotone on $I$.

(iv) The graphs $\{(x,\vphi(x)), x\in I\}$ and $\{(x,\vphi(x)), x\in J\}$ are symmetric with respect to the diagonal $\{(x,x), x\in \R\}$. This and (iii) imply (iv).

(v) We copy an argument from \cite[Prop. 1.1]{Zamp}.
If $I\cup J$ is an interval then $\vphi$ is continuous and strictly monotone on $I\cup J$, by (iii) and (iv).
Suppose that $\vphi$ is strictly increasing.
If $x\in I\cup J$ and $x < \vphi(x)$ then $\vphi(x) < \vphi(\vphi(x))=x$, a contradiction;
similarly, $x > \vphi(x)$ implies $\vphi(x) > x$.
\end{proof}

\begin{remark}\label{m23.20}
(i)
Proposition \ref{m19.1} applies to open, half-open, closed, finite, semi-infinite and infinite intervals $I$, including intervals consisting of a single point. In the case of a semi-infinite interval $I=(a,\infty)$, note that $F(x)$ is asymptotic on
that interval to $-x$, i.e., $\lim_{x\to \infty} F(x) +x=0$, if and only if $\lim_{x\to \infty} \varphi (x)= 0$, since $F(x)=-x+\varphi(x)$. A similar remark applies to $I=(-\infty,a)$.

(ii) Suppose that  $I\cup J$ is an interval. Then $\vphi$ is continuous and strictly decreasing on $I\cup J$. Since  $\{(x,\vphi(x)), x\in I\}$ and $\{(x,\vphi(x)), x\in J\}$ are symmetric with respect to the diagonal $\{(x,x), x\in \R\}$, there exists a unique $x_0$ such that $\vphi(x_0) = x_0$ and, therefore, $F(x_0) =0$. The function $F$ is strictly negative on the part of $I\cup J$ to the right of $x_0$ and it is strictly positive to the left of $x_0$.
\end{remark}

\begin{corollary}\label{m19.2}
The domain $A$ of a solution $F$ to \eqref{m12.1} can be decomposed into a family $\calA$ of disjoint sets of the form $I_\alpha\cup J_\alpha$, where the pair $(I_\alpha,J_\alpha)$ satisfies the conditions in Proposition \ref{m19.1}.
\end{corollary}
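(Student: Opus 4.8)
The plan is to build the decomposition out of the \emph{maximal intervals of continuity} of $F$ and then to pair them up via $\vphi$. Throughout I would use that $F$ solves \eqref{m12.1} on all of $A$, which forces $\vphi(A)\subseteq A$: for $F(x+F(x))$ to be defined we need $\vphi(x)\in A$. Hence every interval on which $F$ is continuous automatically has its $\vphi$-image inside $A$, and $F$ solves \eqref{m12.1} on every subinterval, so Proposition \ref{m19.1} is applicable to each maximal such interval.

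First I would record a gluing lemma: if a family of intervals on which $F$ is continuous shares a common point, then their union is again an interval on which $F$ is continuous. The union is an interval since all members contain the common point; for continuity at a point $y$ of the union I would argue one-sidedly. If the union contains points to the right of $y$, then some member of the family contains both the common point and a point strictly to the right of $y$, hence an entire right-neighbourhood of $y$, so $F$ is right-continuous at $y$; left-continuity is symmetric. A consequence is that, for each $x\in A$, the union $I_x$ of all intervals of continuity through $x$ is itself a maximal interval of continuity, that distinct maximal intervals of continuity are disjoint (two overlapping ones would glue to a strictly larger one, contradicting maximality), and that they partition $A$.

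Next I would upgrade Proposition \ref{m19.1} to the statement that if $I$ is a maximal interval of continuity then so is $J=\vphi(I)$, with $\vphi(J)=I$. Continuity of $F$ on $J$ is immediate because $F=\vphi-\mathrm{id}$ and $\vphi$ is continuous on $J$ by Proposition \ref{m19.1}(iv); moreover $J$ is an interval because $\vphi$ is continuous and monotone on $I$ by (iii). For maximality, let $\tilde J\supseteq J$ be the maximal interval of continuity containing $J$, set $K=\vphi(\tilde J)$, and apply Proposition \ref{m19.1} to $\tilde J$: then $\vphi(K)=\tilde J$, and $K$ is itself an interval of continuity (again by (iv)). From $J\subseteq\tilde J$ together with $\vphi(J)=I$ (Proposition \ref{m19.1}(i) applied to $I$) I obtain $I=\vphi(J)\subseteq\vphi(\tilde J)=K$; since $I$ is maximal and $K\supseteq I$ is an interval of continuity, $I=K$, whence $\tilde J=\vphi(K)=\vphi(I)=J$.

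Finally I would assemble the decomposition. The map $I\mapsto\vphi(I)$ is an involution on the set of maximal intervals of continuity, so these group into pairs, possibly degenerating to singletons when $\vphi(I)=I$. Taking $\calA$ to consist of the sets $I_\alpha\cup J_\alpha$ with $J_\alpha=\vphi(I_\alpha)$, one from each pair, each such set meets exactly the two maximal intervals $I_\alpha$ and $J_\alpha$; by the disjointness of the maximal intervals, any two members of $\calA$ are therefore either identical or disjoint, and together they cover $A$. Each pair $(I_\alpha,J_\alpha)$ satisfies the hypotheses of Proposition \ref{m19.1} by construction. I expect the main obstacle to be precisely the gluing lemma together with the maximality of $J=\vphi(I)$ established in the previous paragraph: these are what guarantee that the candidate pieces $I_\alpha\cup J_\alpha$ do not interlock, so that $\calA$ is a genuine partition rather than merely a cover.
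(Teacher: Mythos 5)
Your proposal is correct and coincides with the paper's own (implicit) route: the paper states Corollary \ref{m19.2} without proof, as an immediate consequence of Proposition \ref{m19.1}, and your argument---partitioning $A$ into maximal intervals of relative continuity via the gluing lemma, proving that $J_\alpha=\vphi(I_\alpha)$ is again such a maximal interval, and pairing the intervals off by the involution---is precisely the natural way to supply the missing details. One cosmetic remark: in the gluing lemma the member furnishing a right-neighbourhood of $y$ should be chosen by cases (the member containing $y$ itself when $y$ lies to the left of the common point, and the member containing a point strictly to the right of $y$ otherwise), but this does not affect correctness.
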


\begin{remark}
The set $\calA$ in Corollary \ref{m19.2} can be finite, countably infinite or uncountable. The sets $I_\alpha\cup J_\alpha$ may be intervals, pairs of disjoint intervals, single points or pairs of points.
\end{remark}

\begin{proposition}\label{m19.3}
Consider intervals $I,J\subset \R$ and a function $\vphi$ such that $\vphi: I \to J$, $\vphi: J\to I$, $\vphi$ is  continuous and strictly monotone on $I$, and is an involution on $I\cup J$. Then $F(x) := \vphi(x) -x$ is continuous on $I\cup J$ and solves \eqref{m12.1} on $I\cup J$.
\end{proposition}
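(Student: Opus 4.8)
The plan is to exploit the fact that with $F(x):=\vphi(x)-x$ the argument $x+F(x)$ collapses to $\vphi(x)$, so that the functional equation \eqref{m12.1} becomes a mere restatement of the involution identity $\vphi\circ\vphi=\mathrm{id}$. Consequently the verification of \eqref{m12.1} is essentially a one-line computation, and the real work lies in the domain bookkeeping and in upgrading the hypothesis ``continuous on $I$'' to continuity on all of $I\cup J$.

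First I would record the set-theoretic consequences of the hypotheses. Since $\vphi\circ\vphi$ is the identity on $I\cup J$ and $\vphi(I)\subseteq J$, applying $\vphi$ gives $I=\vphi(\vphi(I))\subseteq\vphi(J)\subseteq I$, whence $\vphi(J)=I$ and, symmetrically, $\vphi(I)=J$. Thus $\vphi$ restricts to mutually inverse bijections $\vphi|_I\colon I\to J$ and $\vphi|_J\colon J\to I$. In particular, for every $x\in I\cup J$ we have $\vphi(x)\in I\cup J$, so the quantities $F(\vphi(x))$ and $\vphi(\vphi(x))$ are both defined, and no expression below leaves the domain.

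The functional equation then follows uniformly, with no case distinction. For $x\in I\cup J$ put $z=x+F(x)$; by the definition of $F$, $z=x+(\vphi(x)-x)=\vphi(x)\in I\cup J$, so
\[
F(x+F(x))=F(z)=\vphi(z)-z=\vphi(\vphi(x))-\vphi(x)=x-\vphi(x)=-F(x),
\]
the penultimate equality being the involution property on $I\cup J$. This settles \eqref{m12.1} on the whole of $I\cup J$ at once.

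It remains to prove continuity of $F$, equivalently of $\vphi$, on $I\cup J$. Continuity on $I$ is assumed; for continuity on $J$ I would invoke the inverse function theorem for monotone maps, exactly the graph-reflection reasoning of Proposition \ref{m19.1}(iv): $\vphi|_I\colon I\to J$ is a continuous, strictly monotone bijection of intervals, so its inverse, which the involution property identifies with $\vphi|_J$, is continuous and strictly monotone. The step I expect to require the most care — the main obstacle — is then passing from separate continuity on $I$ and on $J$ to continuity on the union, since pasting two continuous pieces can fail at a point lying in the closure of both. I would resolve this by a dichotomy. If $I$ and $J$ are separated, i.e. $\ol I\cap J=\emptyset=I\cap\ol J$ (equivalently $I\cup J$ is not an interval), then near any $x_0\in I\cup J$ every sufficiently close point of $I\cup J$ lies in the same one of $I$, $J$ as $x_0$, so continuity of the relevant restriction suffices. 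Otherwise $\ol I\cap J\neq\emptyset$ or $I\cap\ol J\neq\emptyset$, which forces $I\cup J$ to be an interval; at the junction points one uses that $\vphi$ is strictly monotone on each piece together with $\vphi(I)=J$, $\vphi(J)=I$ to show the one-sided limits match the value (concretely, the common endpoint is the fixed point of the decreasing involution, and both one-sided limits equal it). In either case $\vphi$ is continuous on $I\cup J$, whence $F=\vphi-\mathrm{id}$ is continuous there.
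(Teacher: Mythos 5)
Your reduction of \eqref{m12.1} to the involution identity is exactly the paper's argument, only arranged more cleanly: the paper sets $z=\vphi(x)$ and expands $\vphi(\vphi(x))=x$ to obtain $-F(x)=F(x+F(x))$, which is your one-line computation read backwards, and your continuity and strict monotonicity of $\vphi|_J$ as the inverse of the continuous strictly monotone bijection $\vphi|_I\colon I\to J$ is the paper's graph-reflection argument in different words. Your preliminary observation that the involution forces $\vphi(I)=J$ and $\vphi(J)=I$ is left implicit in the paper and is worth making explicit. In particular, your proof of the ``solves \eqref{m12.1} on $I\cup J$'' half of the conclusion is complete, and correctly uses no continuity at all.

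The genuine gap is your junction case, and it cannot be repaired, because the assertion you are trying to prove there is false. Take $I=(0,1]$, $J=(1,2]$, and $\vphi(x)=x+1$ on $I$, $\vphi(x)=x-1$ on $J$. All hypotheses of Proposition \ref{m19.3} hold: $\vphi\colon I\to J$, $\vphi\colon J\to I$, $\vphi$ is continuous and strictly increasing on $I$, and $\vphi\circ\vphi=\mathrm{id}$ on $I\cup J=(0,2]$. This is your non-separated case, since $I\cap\ol J=\{1\}$; but the junction point $1$ is not a fixed point (indeed $\vphi(1)=2$), $\vphi$ is increasing rather than decreasing on each piece, and $F=\vphi-\mathrm{id}$ equals $1$ on $(0,1]$ and $-1$ on $(1,2]$, hence is discontinuous at $1$ as a function on $(0,2]$, even though it does solve \eqref{m12.1} there. (This is the paper's own Example \ref{m23.24} restricted to $(0,2]$.) So, with ``continuous on $I\cup J$'' read in the usual pointwise sense, the continuity conclusion of the proposition itself fails in the touching configuration; what is true, and what your argument (like the paper's) actually establishes, is that $F$ is continuous on $I$ and on $J$ separately. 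You are in good company: the paper's proof makes the same jump in the sentence ``Since $\vphi$ is continuous on $I\cup J$, so is $F$,'' which silently passes from continuity on each piece to continuity on the union. Your separated-case argument is correct, and the case in which the proposition is later applied (the proof of Theorem \ref{m23.1}, where $I=J$ is a single interval) is also unproblematic; but in general the continuity claim must either be weakened to continuity on $I$ and on $J$, or protected by an extra hypothesis ruling out adjacent intervals of the kind above.
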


\begin{proof}
The graphs $\{(x,\vphi(x)), x\in I\}$ and $\{(x,\vphi(x)), x\in J\}$ are symmetric with respect to the diagonal $\{(x,x), x\in \R\}$ because $\vphi$ is an involution. Since $\vphi$ is  continuous and strictly monotone on $I$, it follows that $\vphi$ is  continuous and strictly monotone on $J$.

Since $\vphi$ is continuous on $I\cup J$, so is $F$.

 Consider any $x\in I\cup J$ and let $z=\vphi (x)=x+F(x)$. Then
\begin{align*}
x+F(x)-F(x)=x = \vphi (\vphi (x))=\vphi (z)=z+F(z)=x+F(x)+F(x+F(x)).
\end{align*}
Subtracting $x+F(x)$ from both sides, we obtain $-F(x)=F(x+F(x))$, i.e., $F$ solves \eqref{m12.1} for $x\in I\cup J$.
\end{proof}

\begin{lemma}\label{m19.4}
 (Conjugacy, reflection and shift invariance)
Suppose that $F$ is a solution to \eqref{m12.1} on $A$.
For a fixed $a\ne 0$, let $F_{a}^{\times}(x):=a^{-1}F(ax)$ for $x$ such that $x\in A/a:=\{y: ya\in A\}$.
Let $F_{a}^{+}(x):=F(a+x)$ for $x$ such that $x\in A-a$. Then $F_a^\times$ and $F_a^+$ solve \eqref{m12.1} on $A/a$ and $A-a$, resp. In particular, the ``reflection'' $F^-(x):=-F(-x)$ is a solution on $-A$.

\end{lemma}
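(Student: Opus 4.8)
The plan is to verify \eqref{m12.1} for each transformed function by a single direct substitution, exploiting the fact that both $F_a^\times$ and $F_a^+$ are obtained from $F$ by pre- and post-composing with an affine map of the argument; the reflection will then drop out as the special case $a=-1$ of the scaling, so no separate argument is needed for it. There is no genuine computation beyond one invocation of \eqref{m12.1} in each case, and the only point that demands attention is keeping the domains straight.

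First, for the scaling I would fix $x\in A/a$, so that $ax\in A$ and \eqref{m12.1} is available at the point $ax$. Since $F_a^\times(x)=a^{-1}F(ax)$, I compute $a\bigl(x+F_a^\times(x)\bigr)=ax+F(ax)=\vphi(ax)$, and therefore
\[
F_a^\times\bigl(x+F_a^\times(x)\bigr)=a^{-1}F\bigl(\vphi(ax)\bigr)=a^{-1}\bigl(-F(ax)\bigr)=-F_a^\times(x),
\]
where the middle equality is \eqref{m12.1} applied at $ax$. This is precisely \eqref{m12.1} for $F_a^\times$ on $A/a$.

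Next, for the shift I would fix $x\in A-a$, so that $a+x\in A$ and \eqref{m12.1} holds at $a+x$. Since $F_a^+(x)=F(a+x)$, I note $a+\bigl(x+F_a^+(x)\bigr)=(a+x)+F(a+x)=\vphi(a+x)$, whence
\[
F_a^+\bigl(x+F_a^+(x)\bigr)=F\bigl(\vphi(a+x)\bigr)=-F(a+x)=-F_a^+(x),
\]
again by a single application of \eqref{m12.1}. Finally, because $F_{-1}^\times(x)=(-1)^{-1}F((-1)x)=-F(-x)=F^-(x)$ and $A/(-1)=\{y:-y\in A\}=-A$, the reflection statement is exactly the scaling claim with $a=-1$, and requires no further work.

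The step I expect to require the most care---though it is bookkeeping rather than a real obstacle---is checking that the transformed functions are actually defined at the points at which \eqref{m12.1} is being evaluated, since ``$F$ solves \eqref{m12.1} on $A$'' already presupposes that $F$ is defined at $\vphi(y)=y+F(y)$ for every $y\in A$, i.e.\ that $\vphi(A)\subseteq A$. Granting this, the relevant arguments are $x+F_a^\times(x)=a^{-1}\vphi(ax)\in A/a$ and $x+F_a^+(x)=\vphi(a+x)-a\in A-a$, so each composite is well defined throughout the asserted domain, and the two displays above are meaningful exactly where the lemma claims.
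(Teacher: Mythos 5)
Your proof is correct and is essentially the same as the paper's: both verify \eqref{m12.1} for $F_a^\times$ and $F_a^+$ by a single substitution of $ax$ (resp.\ $a+x$) into \eqref{m12.1} and a rescaling/rewriting of the argument, with the reflection obtained as the case $a=-1$. Your added attention to where the composites are defined is a harmless refinement of the same argument, not a different route.
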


\begin{proof} Putting $ax$ for $x$ in \eqref{m12.1} yields the following
\begin{eqnarray*}
F(a[x+a^{-1}F(ax)]))+F(ax) &=&0, \\
a^{-1}F(a[x+a^{-1}F(ax)]))+a^{-1}F(ax) &=&0, \\
F_a^\times(x+F_a^\times(x))+F_a^\times(x) &=&0.
\end{eqnarray*}%
Likewise putting $a+x$ for $x$ yields%
\begin{eqnarray*}
F(a+x+F(a+x))+F(a+x) &=&0, \\
F_{a}^+(x+F_{a}^+(x))+F_{a}^+(x) &=&0.
\end{eqnarray*}
\end{proof}

The following lemma is a slightly modified \cite[Thm. 2.1]{Zamp}.

\begin{lemma}\label{m19.5}
Suppose that $a>0$ and $\psi:[-a,a] \to \R$ is  even, differentiable and $\left|\frac\prt{\prt x} \psi(x)\right| < 1$ for all $x$. Let $u\to (u+\psi(u))^{-1}$ denote the inverse of the function $x \to x+\psi(x)$.
Then
\begin{align}
\vphi(u) &= u- 2( u+\psi( u))^{-1}\label{m23.7}
\end{align}
is an involution on the interval $[-a+\psi(a), a+\psi(a)]$.
\end{lemma}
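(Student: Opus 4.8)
My plan is to recognize $\vphi$ as a conjugate of the reflection $t\mapsto -t$ via the change of variables $u = x+\psi(x)$. First I would set $h(x):=x+\psi(x)$ on $[-a,a]$ and note that $h'(x)=1+\psi'(x)>0$, since $|\psi'(x)|<1$; hence $h$ is a strictly increasing (thus injective), differentiable bijection of $[-a,a]$ onto its image. Here the evenness of $\psi$ enters the endpoint computation: $\psi(-a)=\psi(a)$ gives $h(-a)=-a+\psi(a)$ and $h(a)=a+\psi(a)$, so $h:[-a,a]\to J$ with $J:=[-a+\psi(a),\,a+\psi(a)]$, precisely the interval on which the involution is asserted. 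Writing $g:=h^{-1}:J\to[-a,a]$, we have in the notation of the statement $g(u)=(u+\psi(u))^{-1}$ and $\vphi(u)=u-2g(u)$.

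The key computation is to evaluate $\vphi$ in the $t$-coordinate. For $t\in[-a,a]$ put $u=h(t)=t+\psi(t)$, so that $g(u)=t$. Then, invoking evenness $\psi(t)=\psi(-t)$,
\[
\vphi(u) = h(t)-2t = t+\psi(t)-2t = -t+\psi(t) = -t+\psi(-t) = h(-t).
\]
In particular $\vphi(u)=h(-t)\in J$ because $-t\in[-a,a]$, which confirms that $\vphi$ maps $J$ into $J$, so that the composition $\vphi\circ\vphi$ is well-defined on $J$. Moreover this identity says exactly that $\vphi\circ h = h\circ R$ on $[-a,a]$, where $R(t):=-t$; equivalently $\vphi = h\circ R\circ g$.

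With this conjugacy in hand the involution property is immediate:
\[
\vphi\circ\vphi = h\circ R\circ g\circ h\circ R\circ g = h\circ R\circ R\circ g = h\circ g = \mathrm{id}_J,
\]
using $g\circ h=\mathrm{id}$ on $[-a,a]$ and $R\circ R=\mathrm{id}$. Alternatively one may verify $\vphi(\vphi(u))=u$ directly: with $t=g(u)$ one has $\vphi(u)=h(-t)$, whence $g(\vphi(u))=-t$ and $\vphi(\vphi(u))=\vphi(u)-2(-t)=h(-t)+2t=-t+\psi(t)+2t=t+\psi(t)=u$. (As a bonus the conjugacy shows $\vphi$ is continuous and strictly decreasing on $J$, since $R$ is and $h$ is increasing.)

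The only genuinely delicate point I anticipate is the domain bookkeeping rather than any hard estimate: one must check that $h$ is onto $J$ and that $\vphi$ returns values in $J$ so that $g$ may legitimately be applied a second time. Both facts rest on the strict monotonicity of $h$ (from $|\psi'|<1$) together with the evenness of $\psi$ — it is precisely evenness that renders the endpoints of $J$ symmetric about $\psi(a)$ and forces $\vphi$ to conjugate cleanly to the reflection $R$.
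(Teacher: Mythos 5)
Your proof is correct, and it takes a different route from the paper's. The paper argues geometrically: it rotates the graph $\Gamma_1$ of $\psi$ by $\pi/4$ clockwise (and stretches by $\sqrt 2$), observes that $\left|\psi'\right|<1$ guarantees the rotated set $\Gamma_2$ is again the graph of a function on $[-a+\psi(a),a+\psi(a)]$, and concludes that evenness of $\psi$ (symmetry of $\Gamma_1$ about the vertical axis) becomes symmetry of $\Gamma_2$ about the diagonal, which is exactly the involution property; the formula \eqref{m23.7} is then read off from the coordinates of the rotated points. Your argument is the algebraic shadow of this picture: your identity $\vphi(h(t))=h(-t)$, i.e.\ $\vphi=h\circ R\circ h^{-1}$ with $h(x)=x+\psi(x)$ and $R(t)=-t$, says in coordinates precisely that the point $(t+\psi(t),\,-t+\psi(t))$ lies on the graph of $\vphi$, which is the paper's rotation map $(x,y)\mapsto(x+y,\,-x+y)$ applied to $\Gamma_1$. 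What your version buys is rigor and economy: the involution property falls out of conjugacy ($R\circ R=\mathrm{id}$) with no appeal to a picture, the domain bookkeeping ($h$ onto $J$, $\vphi(J)\subset J$) is handled explicitly rather than implicitly, and you get for free that $\vphi$ is continuous and strictly decreasing. What the paper's version buys is the geometric insight that explains where formula \eqref{m23.7} comes from and why the hypothesis $\left|\psi'\right|<1$ is the natural one (it is exactly the condition for the rotated curve to remain a graph), which also ties the lemma to its source in Zampieri's work on involutions of real intervals.
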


\begin{proof}
The  condition $\left|\frac\prt{\prt x} \psi(x)\right| < 1$ implies that if we rotate the graph $\Gamma_1$ of $\psi$ by $\pi/4$ clockwise and stretch it by the factor of $\sqrt{2}$ (in all directions) then it will be the graph $\Gamma_2$ of a function $\vphi$ on the interval $[-a+\psi(-a), a+\psi(a)] = [-a+\psi(a), a+\psi(a)]$. The graph  $\Gamma_1$ is symmetric with respect to the vertical axis, so $\Gamma_2$ is symmetric with respect to the diagonal.
Hence, the function $\vphi$ is an involution.
We will find a formula for $\vphi$ in terms of $\psi$.

If $y=\psi(x)$ then $(x,y)\in\Gamma_1$ and, therefore,
$(x+y, -x+y)\in\Gamma_2$. We will denote this point $ (u, \vphi(u))$. We have
\begin{align*}
u &=  x+y =  x+\psi(x),\\
\psi(x) &= u-x.
\end{align*}
Recall that $u\to (u+\psi(u))^{-1}$ denotes the inverse of the function $x \to x+\psi(x)$. Then $x= (u+\psi( u))^{-1}$ and
\begin{align*}
\vphi(u)&= -x+y =  -x+\psi(x) = -x +  u -x = - 2 x +  u
= -2 ( u+\psi( u))^{-1} +  u.
\end{align*}
\end{proof}

\section{Examples}\label{examp}

\begin{example}
Our first example is just a special case of Proposition \ref{m19.3}.
\bigskip

Consider a finite interval $(x_1, x_2)$ and a strictly increasing function $\vphi:(x_1,x_2)\to \R$. Let $y_1=\vphi(x_1)$ and $y_2 = \vphi(x_2)$. Suppose that $(x_1,x_2) \cap (y_1,y_2) =\emptyset$ and let $\vphi(y)=\vphi^{-1}(y)$ for $y\in(y_1,y_2)$. Then  $F(x) := \vphi(x) -x$  solves \eqref{m12.1} on $(x_1,x_2) \cup (y_1,y_2)$.

Alternatively, consider a finite interval $(x_1, x_2)$ and a strictly decreasing function $\vphi:(x_1,x_2)\to \R$. Let $y_1=\vphi(x_1)$ and $y_2 = \vphi(x_2)$. If $(x_1,x_2) \cap (y_2,y_1) \ne \emptyset$ then assume that $\vphi(y)=\vphi^{-1}(y)$ for $y\in(x_1,x_2) \cap (y_2,y_1)$. Let $\vphi(y)=\vphi^{-1}(y)$ for $y\in(y_2,y_1)$. Then  $F(x) := \vphi(x) -x$  solves \eqref{m12.1} on $(x_1,x_2) \cup (y_2,y_1)$.
\end{example}

\begin{example}\label{m29.1}
We will apply Proposition \ref{m19.3} and Lemmas \ref{m19.5} and \ref{m19.4} to construct a large family of analytic solutions $F$ on any interval $(a,b)$.
Let $d = (b-a)/2$.
There are many analytic functions $\psi:[-d,d]\to \R$ that are even, such that $\psi(d)=0$, and $|\psi(x)| <1$ for all $x\in (-d,d)$.
By Lemma \ref{m19.5},
\begin{align*}
\vphi(u) &:= u- 2( u+\psi( u))^{-1}
\end{align*}
is an involution on the interval $(-d,d)$. By Proposition \ref{m19.3}, the function $F(x) = \vphi(x) -x$ is a solution to \eqref{m12.1} on $(-d,d)$. Lemma \ref{m19.4} implies that
$F_1(x) := F(-d-a+x)$ is an analytic solution to \eqref{m12.1} on $(a,b)$.

\end{example}

\begin{example}
Corollary \ref{m19.2} and Example \ref{m29.1} show that one can decompose the real line into an arbitrary family of disjoint (open or closed) intervals or pairs of intervals (these could be points or pairs of points), and then construct a solution to \eqref{m12.1} on the whole real line by {\it{patching}} together solutions on separate intervals or pairs of intervals.

\end{example}

We now present some explicit examples for illustration or for future reference.

\begin{example}\label{m23.22}
The only straight lines symmetric with respect to the diagonal are the diagonal itself and lines orthogonal to the diagonal. Proposition \ref{m19.1} implies that the only linear solutions to \eqref{m12.1} are $F(x) \equiv 0$ and any function of the form $F(x) = - 2 (x+a)$, for any constant $a$. In all of these cases $A=\R$.

\bigskip

Alternatively, we could start with the involution $\varphi (x)=-x$, construct a solution to \eqref{m12.1} using Proposition \ref{m19.3} by letting $F(x)=-x+\varphi (x)=-2x$, and then use Lemma \ref{m19.4} to generate the solution $F_{a}^{+}(x)= - 2 (x+a)$.

\end{example}

\begin{example}\label{m23.24}
The piecewise constant function
\[
F(x)=(-1)^{k}\text{ for }k<x\leq k+1, \ k\in \Z,
\]
solves \eqref{m12.1} with $A=\R$.
One can use Lemma \ref{m19.4} to construct variants of this example.
\end{example}

\begin{example}
 Suppose that $c\neq 0$ is a constant and let  $F(x)=-(x+c/x)$
for $x\neq 0$, and $F(0)=0$. Then, for $x\neq 0$,
\[
F(x+F(x))=F(-c/x)=-(-c/x+c/(-c/x))=x+c/x=-F(x),
\]%
so $F$ is a solution to \eqref{m12.1}.

For $c>0$, Proposition \ref{m19.1} applies with $I = (0,\infty)$ and $J=(-\infty,0)$.

For $c<0$, Proposition \ref{m19.1} applies with
$I = (0,\sqrt{-c}]$ and $J=[\sqrt{-c}, \infty)$. It also applies with
$I = (-\infty,-\sqrt{-c}]$ and $J=[-\sqrt{-c}, 0)$.

\bigskip

We can also apply Proposition \ref{m19.3} and Lemma \ref{m19.4} to show that $F$ is a solution to \eqref{m12.1}.
If $c>0$, let $a= c^{-1/2}$,
$ \varphi (x)= -x^{-1}$ for $x\ne 0$,  $\varphi(0)=0$, and $F(x)=-x+\varphi (x)=-(x+ x^{-1})$.
Then $F_{a}^{\times }(x)=-a^{-1}(ax+ (ax)^{-1})=-(x+ cx^{-1})$ is a solution to \eqref{m12.1}.

If $c<0$, apply an analogous argument with
$a= |c|^{-1/2}$ and $ \varphi (x)= x^{-1}$ for $x\ne 0$.

\end{example}

\begin{example}\label{m23.21}
Here is a similar example embracing two earlier ones.
It is elementary to check that if $\Delta := b^{2}-4ac\neq 0$, then the function
\begin{align*}
G(x) = -  \frac{2(a x^2 + b x + c)}{2 a x + b}, \qquad x \ne -b/(2a),
\end{align*}
with $G(-b/(2a))=0$, satisfies \eqref{m12.1}.

For $a=0$ and $b\neq 0,$  $G(x)=F_{d}^{+}(x)$ for $F(x)=-2x$ and $d=c/b.$

For $a\neq 0$ and $\Delta>0$,  $G(x) =F_{d}^{\times }(x+b/2a)$ for $%
F(x)=-(x+ x^{-1})$ and $ d=|4a^{2}/\Delta|^{1/2} $.

For $a\neq 0$ and $\Delta<0$,  $G(x) =F_{d}^{\times }(x+b/2a)$ for $%
F(x)=-(x- x^{-1})$ and $ d=|4a^{2}/\Delta|^{1/2} $.

\end{example}

\section{PDEs with freezing}\label{sec:wave}

The main result of this section is the following.

\begin{theorem}\label{m23.1}
Suppose that $a>0$, $h:[-a,a]\to \R$ is  strictly increasing, differentiable, and odd.
Suppose that $T: [-a,a]\to \R$ is  even, differentiable, strictly decreasing on $[0,a]$, and such that $\left|\frac \prt {\prt x} T(x)\right| < 1$ for all $x\in(-a,a)$. Then
there exist continuous functions $\sigma:[-a,a]\times \R\to [0,\infty)$ and $\mu:[-a,a]\times \R\to \R$  satisfying
\begin{align}\label{n21.1}
\frac{\prt}{\prt t}\mu(x,t) &= - \frac{\prt}{\prt x}\sigma(x,t),\\
\frac{\prt}{\prt t}\sigma(x,t) &=  - \frac{\prt}{\prt x} \mu(x,t) ,\label{n21.2}
\end{align}
on the  set $\{(x,t) \in (-a,a)\times \R : \sigma(x,t)>0\}$, and such that
for $x\in[-a,a]$,
\begin{align}\label{m21.1}
 \inf\{t\geq 0: \sigma(x,t) = 0\}&= T(x),\\
\mu(x,T(x)) & = h(x).\label{m21.2}
\end{align}

\end{theorem}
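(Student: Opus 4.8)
The plan is to recognize the system \eqref{n21.1}--\eqref{n21.2} as the one-dimensional wave equation written in Riemann invariants. Setting $w=\mu+\sigma$ and $v=\mu-\sigma$ turns it into $w_t=-w_x$ and $v_t=v_x$, so $w$ is constant along the characteristics $x-t=\mathrm{const}$ and $v$ along $x+t=\mathrm{const}$. Hence $\mu+\sigma=A(x-t)$ and $\mu-\sigma=B(x+t)$ for single-variable functions $A,B$, i.e. $\mu=\tfrac12\big(A(x-t)+B(x+t)\big)$ and $\sigma=\tfrac12\big(A(x-t)-B(x+t)\big)$. The entire task thus reduces to manufacturing two travelling-wave profiles $A,B$ from the terminal data $(T,h)$ and then freezing the result above the curve $t=T(x)$.

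Next I would read off the terminal conditions \eqref{m21.1}--\eqref{m21.2} on the (as yet unknown) freezing curve $t=T(x)$: there $\sigma=0$ and $\mu=h$, which in invariants says $A(x-T(x))=B(x+T(x))=h(x)$. Because $|T'|<1$, the map $x\mapsto x-T(x)$ is a $C^1$, strictly increasing bijection of $[-a,a]$ onto $[-a-T(a),\,a-T(a)]$; let $\chi$ be its inverse and set $A:=h\circ\chi$ on that interval, a $C^1$ strictly increasing function. The crucial simplification is that $T$ is even and $h$ is odd: replacing $x$ by $-x$ gives $-x-T(x)=(-x)-T(-x)$, so $A(-x-T(x))=h(-x)=-h(x)$, and the choice $B(s):=-A(-s)$ then satisfies the second terminal condition automatically. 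This even/odd reflection is precisely the symmetry packaged by Lemma \ref{m19.5} (applied with $\psi=T$), whose involution $\vphi(u)=u-2\,(u+T(u))^{-1}$ sends the left-characteristic coordinate $x+T(x)$ to $-(x-T(x))$; it is this involutive structure -- the functional-equation content -- that couples $A$ and $B$. With it, $\sigma=\tfrac12\big(A(x-t)+A(-x-t)\big)$ depends on the single profile $A$.

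Then I would extend $A$ to a $C^1$, strictly increasing function on all of $\R$ and define the global solution by $\mu=\tfrac12(A(x-t)+B(x+t))$, $\sigma=\tfrac12(A(x-t)-B(x+t))$ for $t\le T(x)$, and $\sigma\equiv 0$, $\mu(x,t):=h(x)$ for $t\ge T(x)$; equivalently $\sigma=\max\{\tfrac12(A(x-t)+A(-x-t)),\,0\}$. Strict monotonicity of $A$ makes $t\mapsto\sigma(x,t)$ strictly decreasing (increasing $t$ lowers both $A(x-t)$ and $A(-x-t)$), while direct substitution gives $\sigma(x,T(x))=\tfrac12(h(x)+h(-x))=0$ and $\mu(x,T(x))=h(x)$. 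Consequently the unfrozen $\sigma$ is strictly positive for every $t<T(x)$ and negative for $t>T(x)$, so the final $\sigma$ is continuous, nonnegative, vanishes exactly on $\{t\ge T(x)\}$, and coincides with the smooth wave solution on the open set $\{\sigma>0\}=\{t<T(x)\}$, where \eqref{n21.1}--\eqref{n21.2} therefore hold; the frozen patch matches continuously because both pieces equal $h(x)$ on $t=T(x)$. Provided $T\ge 0$ (as \eqref{m21.1} forces, the infimum being over $t\ge 0$), this yields $\inf\{t\ge 0:\sigma(x,t)=0\}=T(x)$ and $\mu(x,T(x))=h(x)$, as required.

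The main obstacle is conceptual rather than computational: the curve $t=T(x)$ on which the data must be prescribed is simultaneously the free boundary of the region $\{\sigma>0\}$ where the PDE is required to hold, so one cannot simply pose a Cauchy problem. The resolution is the Riemann-invariant reduction together with the even/odd symmetry -- the exact point at which Lemma \ref{m19.5} and the involution underlying \eqref{m12.1} enter -- which shows the two terminal conditions are compatible and collapse to a single profile $A$; the strict $t$-monotonicity of $\sigma$, inherited from $h$ being strictly increasing and $|T'|<1$, is then what pins the zero-level set to be exactly $\{t\ge T(x)\}$ rather than something larger. Secondary, purely technical care is needed to extend $A$ monotonically and $C^1$ beyond the data interval so that $\sigma\ge 0$ and the wave equation persist throughout the strip, and to confirm the continuity of the frozen matching.
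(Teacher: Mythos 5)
Your proposal is correct and follows essentially the same construction as the paper: d'Alembert traveling-wave solutions of \eqref{n21.1}--\eqref{n21.2}; a single profile built from the terminal data as $h$ composed with the inverse of $g(x)=x-T(x)$ (your $A=h\circ\chi$ is exactly $2f$ for the paper's $f=\tfrac12\,h\circ g^{-1}$, and your $B(s)=-A(-s)$ reproduces the paper's $\sigma=f(x-t)+f(-x-t)$, $\mu=f(x-t)-f(-x-t)$); strict monotonicity of the profile to pin the zero set of $\sigma$ to $\{t\ge T(x)\}$; and freezing by fiat above the curve. The one genuine difference is how the key cancellation on the curve $t=T(x)$ is obtained. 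The paper routes it through the functional equation \eqref{m12.1}: it invokes Lemma \ref{m19.5} (with $\psi=-T$) and Proposition \ref{m19.3} to derive $g^{-1}(z-2g^{-1}(z))=-g^{-1}(z)$, and only then uses oddness of $h$. You instead observe directly that $-x-T(x)=(-x)-T(-x)=g(-x)$ by evenness of $T$, so $A(-x-T(x))=h(-x)=-h(x)$ --- a one-line verification that bypasses the involution machinery (whose role you correctly identify as what is being ``packaged''). Your route is more elementary; the paper's detour is deliberate, since exhibiting the role of \eqref{m12.1} is the very point of the theorem. In two places you are in fact more careful than the paper: you require the extension of $A$ beyond $g([-a,a])$ to be strictly increasing, whereas the paper extends $f$ ``differentiable but otherwise arbitrary'' even though its monotonicity argument for $t\mapsto\sigma(x,t)$ uses values of $f$ outside $g([-a,a])$ (e.g.\ $f(x-t)$ for $x$ near $a$, $t$ near $0$, when $T(a)>0$); and you flag that $T\ge 0$ is implicitly required for \eqref{m21.1} to be attainable, since the infimum there is over $t\ge 0$.
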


\begin{remark}\label{a1.1}
(i)
The equations \eqref{n21.1}-\eqref{n21.2} arose in the analysis of the hydrodynamic limit of a system of pinned billiard balls on a line. See \cite{fold} for the definition of the pinned billiard balls model.
Heuristically speaking, after rescaling the position and time for the set of $n$ pinned balls and taking the limit as $n\to \infty$, we obtain a continuum family of infinitesimal pinned balls with velocities
$$v(x,t) =\mu(x,t) + \sigma(x,t) W(x,t),$$
where $W$ is space-time white noise.
The function $\mu(x,t)$ represents the mean velocity of the infinitesimal ball at position $x$ at time $t$, and $\sigma(x,t)$ represents its standard deviation.
The equations \eqref{n21.1}-\eqref{n21.2} are the result of informal calculations at the discrete level and passage to the limit of the rescaled system.

(ii)
Theorem \cite[Thm. 3.1]{fold} shows that the evolution of a finite system of pinned billiard balls must terminate after a finite number of collisions. 
In other words, the system
must  completely {\it{freeze}} at each location $x$ at some  time (depending on $x$).  
In terms of the hydrodynamic limit, this means that $\sigma(x,t)$ must be identically equal to $0$ for some $t_\infty<\infty$ and all $t\geq t_\infty$.
According to \eqref{n21.1}, $\mu(x,t)$ will not change after time $t_\infty$. These two phenomena are formally represented by  \eqref{m21.1}-\eqref{m21.2}.

(iii)
It follows easily from
\eqref{n21.1}-\eqref{n21.2}
that $\sigma$ satisfies the wave equation
\begin{align}\label{m20.1}
\frac{\prt^2}{\prt t^2}\sigma(x,t) &=   \frac{\prt^2}{\prt x^2} \sigma(x,t),
\end{align}
on the  set $\{(x,t): \sigma(x,t)>0\}$.

Let $X$ be the inverse of $T$ on $[0,a]$, i.e., $X(t) = x$ if $T(x) =t$. Since $\left|\frac \prt {\prt x} T(x)\right| < 1$, we have $\left|\frac \prt{\prt t} X(t)\right| >1$. This seemingly contradicts the bound on the wave speed  which is normalized to be 1 in \eqref{m20.1}. However, there is no contradiction because the boundary of the frozen region does not represent a physical signal that could be used to send information, so it is  not limited by the wave speed.

(iv)
Key to Theorem \ref{m23.1} are solutions to the functional equation%
\begin{align}\label{m23.13}
G(x-2G(x))=-G(x)
\end{align}
(see \eqref{m23.11} below with $G=g^{-1}$), which is equivalent, after substituting $2x$ for $x$ and taking $F(x)=-G(2x)$, to
\[
F(x+F(x))=-F(x),
\]%
i.e., \eqref{m12.1}. For any solution $G$ to \eqref{m23.13} and any strictly increasing
odd function $h$ (the desired terminal velocity profile), the composition%
\[
f(x)=\frac{1}{2}h(G(x))
\]%
yields a familiar solution to the wave equation satisfied by $\sigma $,
with  forward and backward traveling waves with standardized velocity $c=1$:
\[
\sigma (x,t)=f(x+ct)+f(-x-ct).
\]%
In this sense, the functional equation \eqref{m23.13}  generates
the fundamental solution to the PDE\ system. In the proof below, $G$ is the
inverse function $g^{-1}$ to $g(x)=x-T(x),$ where $T(x)$ is the desired
freezing time, satisfying the conditions of Theorem 4.1. Thus the freezing
time $T(x)$ determines and is determined by the ``generator'' $G.$ Indeed%
\begin{align}\label{m23.12}
T(x)=x-g(x)=x-G^{-1}(x).
\end{align}

(v)
Note that the shape $h$ of the frozen function $\mu$
can be chosen independently of the ``speed''  of  freezing represented by $T$.

(vi) It is easy to see that the proof applies also to the case when the finite interval $[-a,a]$ is replaced with $(-\infty, \infty)$.

\end{remark}

\begin{proof}[Proof of Theorem \ref{m23.1}]
In view of \eqref{m20.1}, we will look for
$\sigma$  having the form $\sigma(x,t) = f(x- t) + f(-x- t)$.
It is easy to check that the following functions solve \eqref{n21.1}-\eqref{n21.2},
\begin{align}\label{m23.2}
\sigma(x,t) &= f(x- t) + f(-x- t),\\
\mu(x,t) &= f(x- t) - f(-x- t) ,\label{m23.3}
\end{align}
for any differentiable function $f$.

Let $z=g(x) =x-T(x)$ and note that $g$ is strictly increasing, by the assumption that $\left|\frac \prt {\prt x} T(x)\right| < 1$.  Hence the inverse function $g^{-1}$ is well defined and accordingly $x=g^{-1}(z)$.
Let
\begin{align}
f(z)&=\frac 1 2 h(g^{-1}(z)).\label{m23.5}
\end{align}
This defines the function $f(z)$ for
\begin{align}\label{m26.1}
z\in g([-a,a]) = [-a-T(-a), a-T(a)]= [-a-T(a), a-T(a)].
\end{align}
We extend $f$ to the whole real line so that it remains differentiable but is otherwise arbitrary.

We will show that if we substitute this function $f$ in \eqref{m23.2}-\eqref{m23.3} then   $(\sigma, \mu)$ satisfies \eqref{m21.1}-\eqref{m21.2}.

We have
\begin{align}\label{m21.4}
f(x-T(x))& = f(z).
\end{align}
Since $T(x)= x-z =  g^{-1}(z)-z$,
\begin{align*}
& -x-T(x)=  -g^{-1}(z) - g^{-1}(z)+z = z - 2  g^{-1}(z),
\end{align*}
and, therefore,
\begin{align}\label{m23.8}
f(-x-T(x)) = f(z - 2  g^{-1}(z)).
\end{align}
Note that if $x\in[-a,a]$, then
\begin{align*}
-x-T(x) \in [-a-T(a), a-T(-a)] = [-a-T(a), a-T(a)]
\end{align*}
and so, in view of \eqref{m26.1}, the function in \eqref{m23.8} is defined by the formula given in \eqref{m23.5}.

Let $z\to( z-T( z))^{-1}$ denote the inverse of the function $x\to x- T(x)$.
Inverting $g(x)=  x - T(x)$ yields $g^{-1}(z) =   ( z-T( z))^{-1}$.
The function $\vphi(z) := z-2( z-T( z))^{-1}=z-2g^{-1}(z)$ is an involution on the interval
$[-a-T(-a), a-T(a)]=[-a-T(a), a-T(a)]$
according to Lemma \ref{m19.5} and \eqref{m23.7}, because $-T$ satisfies the assumptions placed on $\psi$ in that lemma.
By Proposition \ref{m19.3}, $F(z) := \vphi(z) - z$ is a solution to \eqref{m12.1}, i.e.
\begin{align}\label{m23.9}
F(z) = -  F(z + F(z)).
\end{align}
We have
\begin{align*}
F(z) =  \vphi(z) - z = -2( z-T( z))^{-1} = -  2 g^{-1}(z),
\end{align*}
so \eqref{m23.9} gives
\begin{align*}
-  2 g^{-1}(z) &=   2 g^{-1}(z -  2 g^{-1}(z)).
\end{align*}
That is, as in \eqref{m23.13},
\begin{align}\label{m23.11}
 g^{-1}(z -  2 g^{-1}(z))=-g^{-1}(z).
\end{align}

Since $h$ is assumed to be odd and strictly increasing, and $f$ is defined by \eqref{m23.5},
\begin{align*}
 h(g^{-1}(z)) &=  h(-g^{-1}(z - 2 g^{-1}(z)))  =- h(g^{-1}(z -  2 g^{-1}(z))),\\
 f(z) &=   - f(z -  2 g^{-1}(z)).
\end{align*}
Substituting from \eqref{m21.4} and \eqref{m23.8} gives
\begin{align}\label{m21.3}
f(x-T(x)) = - f(-x-T(x)),
\end{align}
so
\begin{align*}
 \sigma(x,T(x)) = f(x- T(x)) + f(-x-T(x))=0.
\end{align*}

Recall that $g$ is strictly increasing and continuous, so the same holds for $g^{-1}$. This also applies to $h$, by assumption. Hence, the formula \eqref{m23.5} shows that $f$  is strictly increasing and continuous. This implies that for a fixed $x$, the functions $t\to f(x-t)$ and $t\to f(-x-t)$ are strictly decreasing, and, therefore, $t\to \sigma(x,t)$ is strictly decreasing. Since $\sigma(x,T(x)) =0$, we conclude that $\sigma(x,t) >0 $ for $t< T(x)$.
This proves \eqref{m21.1}.

By \eqref{m23.5}, \eqref{m21.4} and \eqref{m21.3}
\begin{align*}
 \mu(x,T(x)) &= f(x-T(x)) - f(-x-T(x))
= 2 f(x-T(x)) = 2 f(z) = h(g^{-1}(z))\\
& = h(x) .
\end{align*}
This proves \eqref{m21.2}.

We restrict formulas \eqref{m23.2}-\eqref{m23.3} defining $\sigma$ and $\mu$ to $(x,t)$ with $t < T(x)$. For $t\geq T(x)$, we put
$$\sigma(x,t) = 0 \hbox{ and } \mu(x,t) = h(x).$$
\end{proof}

\begin{remark}
We note that the involution $\varphi $
arising from the function $F$ in the proof of Theorem \ref{m23.1} (see \eqref{m23.9}) is a ``reflection in $-T(x)$,'' i.e.,
\[
\varphi (-T(x)\pm x)=-T(x)\mp x.
\]
Indeed,%
\begin{equation}\label{m23.14}
\varphi (-T(x)+x)=\varphi (z)=z-2g^{-1}(z)=x-T(x)-2x=-T(x)-x.
\end{equation}
From here, as $\varphi $ is an involution,
\begin{equation}\label{m23.15}
-T(x)+x=\varphi \varphi (-T(x)+x)=\varphi (-T(x)-x).
\end{equation}%
\end{remark}

We now illustrate Theorem \ref{m23.1} with three explicit examples. From the purely mathematical point of view, the examples are routine applications of Theorem \ref{m23.1}. However, in the context of the pinned balls model discussed in Remark \ref{a1.1} (i), it is far from obvious how the freezing might proceed. Does it have to be progressive? Or can it happen simultaneously
along a substantial stretch of the space? Our examples show that three possible scenarios can occur: continuous progression of the front of the frozen region, simultaneous freezing over the whole spacial interval at the terminal time, and a mixture of the two phenomena---progressive freezing followed by the simultaneous freezing of a part of the space at the terminal time.

Below, the notation is as in the statement and proof of Theorem \ref{m23.1}.

\begin{example}\label{m30.1}
Let
\begin{align*}
a& = \sqrt{4 -1/e^4},\\
T(x) & = 2- \sqrt{x^2+1/e^4},\\
h(x) &=  2 \log \left(\sqrt{ x^2+1/e^4}+x\right)+4 .
\end{align*}
Then for $t\geq 0$ and $x\in[-a,a]$,
\begin{align*}
f(x) &= \log(x+2) + 2,\\
\sigma(x,t) &=  \log(x-t+2)  + \log(-x-t+2) + 4,\\
\mu(x,t) &=\log(x-t+2) - \log(-x-t+2) ,\\
\sigma(x,0) &=
\log (2-x)+\log (x+2)+4,\\
\mu(x,0)
&=\log (x+2)-\log (2-x).
\end{align*}
It follows from \eqref{m23.12}, the formula $F(x)=-G(2x)=-g^{-1}(2x)$, and some calculations, that
\begin{align*}
F(x) = -  \frac{(2x+2)^2 -e^{-4}}{4x + 4}, \qquad x \ne -1.
\end{align*}
This is a special case of Example \ref{m23.21}.

\end{example}

\begin{example}\label{m30.2}

We will consider the problem on the spacial interval $(-a,a)=(-1,1)$.
We take
\begin{align*}
f(x) =
\begin{cases}
1+x, & x\leq 0,\\
1-x, & x>0.
\end{cases}
\end{align*}
Then for $x\in(-1,1)$ and $t\geq 0$,
\begin{align*}
\sigma(x,t) &=
\begin{cases}
2-2|x|, & |x|>  t,\\
2-2 t,  & |x|\leq  t,
\end{cases}\\
\sigma(x,0) &=  2-2|x|,\\
\mu(x,t) &=
\begin{cases}
2 t, & |x|>  t,\\
2x , & |x|\leq  t,
\end{cases}\\
\mu(x,0) &= 0,\\
T(x) & = 1,\\
h(x) & = 2x.
\end{align*}
In the present example, freezing is simultaneous at the terminal time, and the terminal shape $h$ is a linear function.

We note that this case arises from the solution $F(x)=-2(x+1/2)$ of \eqref{m12.1}, as in Example \ref{m23.22}.

\end{example}

\begin{example}\label{m23.23}

We will consider the problem on the spacial interval $(-a,a)=(-2,2)$.
We take $h(x) \equiv 1$ and
\begin{align*}
T(x) =
\begin{cases}
1-|x|/2, & 1\leq |x| \leq 2,\\
1/2, & |x|<1.
\end{cases}
\end{align*}
Then
\begin{align*}
g^{-1}(z) =
\begin{cases}
\frac 2 3 (z+1), & z \geq 1/2,\\
z+1/2, & -3/2 < z < 1/2 ,\\
2(z+1), & z\leq -3/2.
\end{cases}
\end{align*}
From here it is straightforward to check that this corresponds to a solution $F$ of \eqref{m12.1} in which $\varphi$ is an involution taking $[-1,-3/4]$ to $[1/4,1]$ and $[-3/4,1/4]$ to itself.
\bigskip

The following formulas hold for $|x|\leq2$, $0\leq t \leq 1/2$, $t\leq 1-|x|/2$,
\begin{align*}
\sigma(x,t)&=
\begin{cases}
\frac 2 3 x -\frac 4 3 t + \frac 4 3, &
x + \frac 3 2 \leq t \leq -x -\frac 1 2,\\
\frac 16 x - \frac56 t +\frac 7{12}, &
x-\frac 12 \leq t \leq x+\frac 32 \text{ and } t \leq - x - \frac12,\\
\frac 1 2 - t ,&
 x-\frac 12 \leq t \leq x+\frac 32 \text{ and }
- x - \frac 12 \leq t \leq -x + \frac 32,\\
-\frac 16 x - \frac56 t +\frac 7{12}, &
 t\leq x-\frac 12  \text{ and }
- x - \frac 12 \leq t \leq -x + \frac 32,\\
-\frac 2 3 x -\frac 4 3 t + \frac 4 3, &
-x+\frac 32 \leq t \leq x - \frac 12,
\end{cases}\\
\mu(x,t)&=
\begin{cases}
\frac 4 3 x -\frac 2 3 t + \frac 2 3, &
x + \frac 3 2 \leq t \leq -x -\frac 1 2,\\
\frac 56 x - \frac16 t -\frac 1{12}, &
x-\frac 12 \leq t \leq x+\frac 32 \text{ and } t \leq - x - \frac12,\\
 x,&
 x-\frac 12 \leq t \leq x+\frac 32 \text{ and }
- x - \frac 12 \leq t \leq -x + \frac 32,\\
\frac 56 x + \frac16 t +\frac 1{12}, &
 t\leq x-\frac 12  \text{ and }
- x - \frac 12 \leq t \leq -x + \frac 32,\\
\frac 4 3 x +\frac 2 3 t - \frac 2 3, &
-x+\frac 32 \leq t \leq x - \frac 12.
\end{cases}
\end{align*}

\begin{align*}
\sigma(x,0)&=
\begin{cases}
\frac 2 3 x  + \frac 4 3, &
x  \leq  -\frac 3 2,\\
\frac 16 x  +\frac 7{12}, &
-\frac 3 2 \leq x \leq  - \frac12,\\
\frac 1 2  ,&
 -\frac 12 \leq x  \leq  \frac 12,\\
-\frac 16 x  +\frac 7{12}, &
\frac 12 \leq x \leq \frac 32,\\
-\frac 2 3 x  + \frac 4 3, &
x \geq  \frac 32,
\end{cases}\\
\mu(x,0)&=
\begin{cases}
\frac 4 3 x  + \frac 2 3, &
x  \leq  -\frac 3 2,\\
\frac 56 x  -\frac 1{12}, &
-\frac 3 2 \leq x \leq  - \frac12,\\
 x,&
-\frac 12 \leq x  \leq  \frac 12,\\
\frac 56 x  +\frac 1{12}, &
\frac 12 \leq x \leq \frac 32,\\
\frac 4 3 x  - \frac 2 3, &
x \geq  \frac 32.
\end{cases}
\end{align*}

In this example, we have mixed modes of freezing---continuous freezing at the outer regions of the spacial interval $(-2,2)$ and simultaneous freezing in the middle of the interval.

The functions  $T$ and $h$ are not differentiable, hence, the assumptions of Theorem \ref{m23.1} are not satisfied. But one can justify the example by considering differentiable approximations of $T$ and $h$ and passage to the limit. Alternatively, one can check directly that \eqref{n21.1}-\eqref{n21.2} and \eqref{m21.1}-\eqref{m21.2} are satisfied.
\end{example}

 \begin{remark}
  In Example \ref{m23.23}, underlying the passage of a monotone sequence of differentiable solutions $F_{n}$ of \eqref{m12.1}
   to a continuous pointwise limit solution of \eqref{m12.1} is the convergence of the corresponding involutions $\varphi_n$ and implicit reliance on uniformity, appealing to the Dini/P{\'o}lya--Szeg\H{o} monotone convergence theorems. See respectively \cite[7.13]{Rud1976} for monotone convergence of continuous functions to a continuous pointwise limit, and \cite{PolS1925}, Vol. 1 pp. 63, 225, Problems II 126 and 127, or  \cite[§17, pp. 104--5]{Boa1981}, when the functions are monotone.

  \end{remark}

\section{Acknowledgments}

We are grateful to Nicholas Bingham, Jeremy Hoskins, Christopher Small, Stefan Steinerberger and John Sylvester for very helpful advice.


\section{Addenda}
In this section we include some extra observations on the functional equation \eqref{m12.1}. For the first result cf. Remark \ref{m23.20} (i).

\begin{proposition}If a solution $F$ to \eqref{m12.1} on $(w,\infty)$  has a  vertical asymptote at $x=w$ and $\lim_{x\rightarrow w+}F^{\prime }(x)=-\infty $, then
\[
\lim_{x\rightarrow +\infty }F^{\prime }(x)=-1.
\]
\end{proposition}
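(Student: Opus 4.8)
The plan is to work entirely with the involution $\varphi(x) = x + F(x)$ and to translate the hypotheses on $F$ into statements about $\varphi$ near the two ends of $(w,\infty)$. Since $\varphi'(x) = 1 + F'(x)$, the target $\lim_{x\to\infty} F'(x) = -1$ is equivalent to $\lim_{x\to\infty}\varphi'(x) = 0$, so the whole argument reduces to showing that $\varphi'$ vanishes at $+\infty$.

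First I would pin down the direction of the blow-up at $w$. Because $\lim_{x\to w+}F'(x) = -\infty$, the function $F$ is strictly decreasing on some right-neighborhood $(w, w+\delta)$, so its limit as $x\to w+$ is its supremum there; the vertical-asymptote hypothesis then forces $\lim_{x\to w+}F(x) = +\infty$, the alternative value $-\infty$ being incompatible with $F$ being decreasing. Consequently $\varphi(x) = x + F(x) \to +\infty$ as $x\to w+$. By Proposition~\ref{m19.1}, $\varphi$ is a continuous involution on $(w,\infty)$, hence injective and therefore strictly monotone; since it tends to $+\infty$ at the left endpoint it must be strictly decreasing, and as an involution it is a bijection of $(w,\infty)$ onto itself. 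In particular $\varphi$ restricted to $(w, w+\delta)$ is a decreasing homeomorphism onto a neighborhood $(M,\infty)$ of $+\infty$.

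The engine of the proof is the differentiated involution identity. Differentiating $\varphi(\varphi(x)) = x$ gives $\varphi'(\varphi(x))\,\varphi'(x) = 1$. Near $w$ we have $\varphi'(x) = 1 + F'(x) \to -\infty$, so in particular $\varphi'(x) \neq 0$ there; since $\varphi$ is its own inverse, the inverse function theorem guarantees that $\varphi$ is differentiable at each point $u = \varphi(x)$ with $\varphi'(u) = 1/\varphi'(x)$, which also supplies the differentiability of $F$ at large arguments that the statement implicitly requires. Letting $x \to w+$ we obtain $\varphi'(\varphi(x)) = 1/\varphi'(x) \to 0$. Finally I would observe that as $x$ ranges over $(w, w+\delta)$ the value $u = \varphi(x)$ ranges over all of $(M,\infty)$, with $u\to\infty$ precisely as $x\to w+$; hence this is a genuine statement $\lim_{u\to\infty}\varphi'(u) = 0$, not merely a limit along a curve. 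Therefore $F'(u) = \varphi'(u) - 1 \to -1$.

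The step I expect to require the most care is the last one: upgrading $\lim_{x\to w+}\varphi'(\varphi(x)) = 0$ to $\lim_{u\to\infty}\varphi'(u) = 0$. This is exactly where the bijectivity and monotonicity of $\varphi$ are used — one must know that $\varphi$ carries a full right-neighborhood of $w$ onto a full neighborhood of $+\infty$, so that every large $u$ is attained and the substitution $u = \varphi(x)$ exhausts a neighborhood of infinity. A secondary point worth stating explicitly is that $F$ is indeed differentiable for large $x$: this is not assumed directly but follows, as above, from differentiability near $w$ together with $\varphi'\neq 0$ via the inverse function theorem.
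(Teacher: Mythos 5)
Your proof is correct and follows essentially the same route as the paper: both arguments rest on the involution $\varphi(x)=x+F(x)$ and the fact that reflection across the diagonal turns the infinite slope at $w$ into a vanishing slope at $+\infty$. Your differentiated identity $\varphi'(\varphi(x))\,\varphi'(x)=1$ is just the analytic form of the paper's ``by symmetry'' step, and your additional care --- pinning down that $F\to+\infty$ at $w$, that $\varphi$ maps a right-neighborhood of $w$ onto a full neighborhood of $+\infty$, and that differentiability at large arguments comes from the inverse function theorem --- fills in details the paper leaves implicit.
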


\begin{proof}
Recall Remark \ref{m23.20} (i). The function $\vphi(x) := x+F(x)$ is an involution so its graph is symmetric with respect to the diagonal. We have assumed that $\lim_{x\rightarrow w+}F^{\prime }(x)=-\infty $ so $\lim_{x\rightarrow w+}\vphi^{\prime }(x)=-\infty $. Hence, by symmetry,
$\lim_{x\to+\infty}\vphi^{\prime }(x)=0 $, and, therefore,
$\lim_{x\rightarrow +\infty }F^{\prime }(x)=
\lim_{x\to+\infty}(\vphi(x)-x)' = -1$.
\end{proof}

\bigskip

The result is unchanged if instead $\lim_{x\rightarrow w+}F^{\prime }(x)=+\infty $.
\bigskip

Applying reflection (see Lemma \ref{m19.4}),
an analogous calculation with a leftmost vertical asymptote supports the corresponding conclusion of an asymptotic slope of $-1$ at $-\infty$.

\bigskip

\begin{proposition} \textit{If the solution function }$F$\textit{\ is
real analytic near }$x=0$\textit{\ and }$F(0)=0,$\textit{\ then either }$%
F^{\prime }(0)=-2$\textit{\ or }$F\equiv 0$ \textit{near} $x=0.$
\end{proposition}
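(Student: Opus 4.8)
The plan is to exploit the fact that $\varphi(x) := x + F(x)$ is an analytic involution fixing the origin and to read off the constraints on $F$ from the low-order Taylor coefficients of $\varphi$. Since $F$ is real analytic near $x=0$ with $F(0)=0$, I would write $F(x) = \sum_{k\ge 1} c_k x^k$ on a neighborhood of $0$, so that $\varphi(x) = (1+c_1)x + \sum_{k\ge 2} c_k x^k$ is analytic with $\varphi(0)=0$, where $c_1 = F'(0)$. Because $F$ solves \eqref{m12.1} on a neighborhood of $0$, Proposition \ref{m19.1}(i) (equivalently, the direct computation $\varphi(\varphi(x)) = \varphi(x) + F(\varphi(x)) = x + F(x) - F(x+F(x)) = x$) shows that $\varphi$ is an involution there.

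First I would linearize. Differentiating the identity $\varphi(\varphi(x)) = x$ at $x=0$ and using $\varphi(0)=0$ yields $\varphi'(0)^2 = 1$, hence $\varphi'(0) = 1 + c_1 \in \{1,-1\}$, that is, $F'(0) = c_1 \in \{0,-2\}$. The value $c_1 = -2$ is precisely the first alternative in the statement, so it remains only to treat the case $\varphi'(0)=1$, i.e.\ $F'(0)=0$, and to show that it forces $F \equiv 0$ near $0$.

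The heart of the argument is the classical fact that a real-analytic involution tangent to the identity at a fixed point is the identity near that point. Supposing $\varphi'(0)=1$ but $\varphi \ne \mathrm{id}$ near $0$, I would let $k\ge 2$ be the least index with $c_k \ne 0$, so $\varphi(x) = x + c_k x^k + O(x^{k+1})$. Since $k\ge 2$, one has $\varphi(x)^k = x^k + O(x^{k+1})$, whence $\varphi(\varphi(x)) = \varphi(x) + c_k\,\varphi(x)^k + O(x^{k+1}) = x + 2c_k x^k + O(x^{k+1})$. Matching this against $\varphi(\varphi(x)) = x$ forces $2c_k = 0$, a contradiction. Thus $c_k = 0$ for all $k\ge 1$ and $F \equiv 0$ on a neighborhood of $0$, completing the dichotomy.

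The only point requiring care—and the step I expect to be the main obstacle—is the final one: I must justify that comparing Taylor coefficients is legitimate (the composition of convergent power series is again convergent, so $\varphi\circ\varphi$ and $\mathrm{id}$ are analytic functions whose coefficients may be matched), and I must correctly track that the $x^k$ coefficient of $\varphi\circ\varphi$ is $2c_k$ rather than $c_k$—the doubling being exactly what produces the contradiction. Everything else follows directly from the involution property supplied by Proposition \ref{m19.1}.
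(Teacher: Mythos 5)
Your proof is correct, but it takes a genuinely different route from the paper's. The paper never passes to the involution: it differentiates the functional equation $F(x+F(x))=-F(x)$ directly, first once to get $0=F'(0)(2+F'(0))$, and then $n$ times to establish by induction an identity of the form
\begin{equation*}
0=F^{(n)}(x)+F^{(n)}(x+F(x))\bigl(1+F'(x)\bigr)^{n}+\sum_{j=1}^{n-1}F^{(j)}(x+F(x))\,G_{j}^{n}(x),
\end{equation*}
with unspecified analytic coefficients $G_{j}^{n}$; setting $x=0$ and inducting on $n$ (using $F^{(j)}(0)=0$ for $j<n$) yields $2F^{(n)}(0)=0$ for every $n$, whence $F\equiv 0$ by analyticity. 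You instead transfer everything to the involution identity $\varphi\circ\varphi=\mathrm{id}$ for $\varphi(x)=x+F(x)$ and run a lowest-nonvanishing-coefficient argument: if $\varphi(x)=x+c_k x^k+O(x^{k+1})$ with $k\ge 2$ and $c_k\ne 0$, then $\varphi(\varphi(x))=x+2c_k x^k+O(x^{k+1})$, contradicting $\varphi\circ\varphi=\mathrm{id}$. The underlying mechanism is the same in both proofs --- the first nontrivial Taylor coefficient gets doubled, forcing it to vanish (note $F^{(n)}(0)=n!\,c_n$, so the paper's $2F^{(n)}(0)=0$ and your $2c_k=0$ are the same fact) --- but your route buys a cleaner argument: it avoids the ad hoc inductive formula with the functions $G_j^n$, replaces a full derivative induction by a single minimal-index contradiction, and recovers the classical fact that a real-analytic involution tangent to the identity at a fixed point is the identity there (essentially a formal power series statement). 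The paper's approach, in exchange, is self-contained at the level of the original equation and does not require setting up composition of power series. Both proofs share the same implicit domain hypothesis, namely that the functional equation holds on a neighborhood of $0$ so that $\varphi(x)$ stays in the domain for small $x$.
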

\bigskip

\begin{proof} Differentiating the functional equation for $F$ we obtain%
\[
0=F^{\prime }(x)+F^{\prime }(x+F(x))(1+F^{\prime }(x)),
\]%
so setting $x=0$%
\[
0=F^{\prime }(0)+F^{\prime }(0)(1+F^{\prime }(0))=F^{\prime }(0)(2+F^{\prime
}(0)).
\]%
So either $F^{\prime }(0)=-2$ or $F^{\prime }(0)=0.$

A little experimentation shows that for $n\geq 1$%
\begin{align}\label{a10.1}
0=F^{(n)}(x)+F^{(n)}(x+F(x))(1+F^{\prime
}(x))^{n}+\sum_{j=1}^{n-1}F^{(j)}(x+F(x))G_{j}^{n}(x),
\end{align}
for some real analytic functions $G_{j}^{n}(x).$ We will prove this formula
by induction. It is easy to see that it is valid for $n=1$. Assuming that it holds for $n$, we obtain by differentiating
\begin{align*}
0 &=F^{(n+1)}(x)+F^{(n+1)}(x+F(x))(1+F^{\prime
}(x))^{n+1}\\
&\qquad+nF^{(n)}(x+F(x))(1+F^{\prime }(x))^{n-1}F^{\prime \prime }(x) \\
&\qquad+\sum_{j=1}^{n-1}F^{(j+1)}(x+F(x))(1+F^{\prime
}(x))G_{j}^{n}(x)+\sum_{j=1}^{n-1}F^{(j)}(x+F(x))(G_{j}^{n})^{%
\prime }(x)
\end{align*}%
and this is of the form%
\[
0=F^{(n+1)}(x)+F^{(n+1)}(x+F(x))(1+F^{\prime
}(x))^{n+1}+\sum_{j=1}^{n}F^{(j)}(x+F(x))G_{j}^{n+1}(x),
\]
so \eqref{a10.1} is proved.

Assume that $F(0)=0$ and $F'(0)=0$. We will prove that $F(x) \equiv 0$ in a neighborhood of $0$ by showing that all derivatives of $F$ at 0 vanish.

Suppose that $F^{(j)}(0)=0$ for
$0\leq j\leq n-1$. Then
setting $x=0$ in \eqref{a10.1}, it follows that
\[
0=F^{(n)}(0)+F^{(n)}(0)+\sum_{j=1}^{n-1}F^{(j)}(0)G_{j}^{n}(0)=2F^{(n)}(0)
\]%
and so also $F^{(n)}(0)=0.$ This implies that $F\equiv 0.$ \hfill
\end{proof}

\end{document}